\documentclass[letterpaper, 10 pt, conference]{ieeeconf}  

\IEEEoverridecommandlockouts                              

\usepackage{amsmath,amssymb}
\usepackage{mathtools}
\usepackage{amsthm}
\usepackage{amsfonts}
\usepackage{cite}
\usepackage{comment}
\usepackage{multirow}

\newcommand{\randVar}{\boldsymbol{w}}
\newcommand{\desVar}{\boldsymbol{u}}
\newcommand{\statVar}{\boldsymbol{x}}
\newcommand{\costHead}{c}
\newcommand{\distanceHead}{d}
\newcommand{\cost}[2]{\costHead(#1,\,#2)}
\newcommand{\distance}{\distanceHead(\refProbHead,\,\realProbHead)}
\newcommand{\distanceT}[1]{\distanceHead(\refProbHead,\,\realProbHead_{#1})}

\newcommand{\dParameter}{\gamma}

\newcommand{\setRand}{\mathcal{W}}
\newcommand{\setDes}{\mathcal{U}}
\newcommand{\setStat}{\mathcal{X}}

\newcommand{\dimRand}{n}
\newcommand{\dimDes}{m}
\newcommand{\dimStat}{n}

\newcommand{\real}{\mathbb{R}}
\newcommand{\realPlus}{[0,\,\infty)}
\newcommand{\realPlusPlus}{(0,\,\infty)}

\newcommand{\refProbHead}{{P}_0}
\newcommand{\refProb}{\refProbHead(\randVar)}
\newcommand{\realProbHead}{P}
\newcommand{\realProb}{\realProbHead(\randVar)}
\newcommand{\realProbTHead}[1]{\realProbHead_{#1}}
\newcommand{\realProbT}[1]{\realProbTHead{#1}(\randVar_{#1})}

\newcommand{\probabilityHead}{\mathcal{P}}
\newcommand{\probability}[1]{\probabilityHead\left(#1\right)}

\newcommand{\probValue}[2]{\mathbb{P}_{#1}\left[#2\right]}
\newcommand{\expectation}[2]{\mathbb{E}_{#1}\left[#2\right]}
\newcommand{\variance}[2]{\mathbb{V}_{#1}\left[#2\right]}

\newcommand{\anySet}{S}
\newcommand{\anyRandom}{s}
\newcommand{\anyCondition}{A}
\newcommand{\anyFunction}{f}

\newcommand{\driftHead}{f}
\newcommand{\drift}{f(\statVar,\,\desVar,\,\randVar)}
\newcommand{\valueDRCHead}{V_\mathrm{DRC}}
\newcommand{\valueMVHead}{V}
\newcommand{\valueDRC}{\valueDRCHead(\statVar)}
\newcommand{\valueMV}{\valueMVHead(\statVar)}

\theoremstyle{definition}
\newtheorem{dfn}{Definition}

\newtheorem{prop}[dfn]{Proposition}

\newtheorem{thm}[dfn]{Theorem}
\newtheorem{cor}[dfn]{Corollary}
\newtheorem{rem}[dfn]{Remark}

\title{\LARGE \bf
Avoiding Semi-Infinite Programming in Distributionally Robust Control Based on Mean--Variance Metrics*
}

\author{Yuma Shida$^{1,**}$ and Yuji Ito$^{1}$
\thanks{*This work has been submitted to the IEEE for possible publication. Copyright may be transferred without notice, after which this version may no longer be accessible. This work was not supported by any organization.}
\thanks{$^{1}$Toyota Central R\&D Labs., Inc., 41-1, Yokomichi, Nagakute, Aichi, 480-1192, Japan.         {\tt\small \{Yuma.shida.fw, ito-yuji\}@mosk.tytlabs.co.jp} }%
\thanks{$^{**}$Permanent affiliation: Toyota Motor Corporation, 1, Toyota-Cho, Toyota City, Aichi, 471-8571, Japan. {\tt\small yuma\_shida@mail.toyota.co.jp} }%
}

\begin{document}

\maketitle
\thispagestyle{empty}
\pagestyle{empty}

\begin{abstract}
Conventional stochastic control methods have several limitations. They focus on optimizing the average performance and, in some cases, performance variability; however, their problem settings still require an explicit specification of the probability distributions that determine the system's stochastic behavior. 
Distributionally robust control (DRC) methods have recently been developed to address these challenges. However, many DRC approaches involve handling infinitely many inequalities. For instance, DRC problems based on the Wasserstein distance are commonly obtained by solving semi-infinite programming (SIP) problems. 
Our proposed method eliminates the need for SIP when solving discrete-time, discounted, distributionally robust optimal control problems. By introducing a penalty term based on a specific distributional distance, we establish upper bounds, and under appropriate conditions, demonstrate the equivalence between distributionally robust optimization problems and mean--variance minimization problems. This reformulation reduces the original DRC problem to a discounted mean--variance cost optimization problem. In linear-quadratic regulator settings, the corresponding control laws are obtained by solving the Riccati equation. Numerical experiments demonstrate that the theoretical maximum value of the discounted cumulative cost for the proposed method is lower than that for the conventional method.
\end{abstract}

\section{INTRODUCTION}
Real-world systems often encounte uncertainties such as randomness. Stochastic optimal control (SOC) is an approach for optimizing control policies in systems subjected to random disturbances \cite{bertsekas1996stochastic,crespo2003stochastic}. For example, methods based on Bellman's principle of optimality make it more tractable to compute control solutions for systems with state and cost-function constraints. 

Many SOC methods have primarily focused on optimizing the expected performance, such as the expected cost under a known probability distribution. However, optimizing expectations often results in unsatisfactory outcomes. To address this limitation, risk-sensitive control methods \cite{nishimura2021rat,kishida2022risk,whittle1981risk,petersen2002minimax} has been developed, which accounts for the higher-order characteristics of the distribution, such as variance, in addition to expectations.

In contrast, when the true probability distribution of the system is unknown, distributionally robust control (DRC) \cite{kishida2022risk,taskesen2023distributionally,liu2023data,yang2020wasserstein} has emerged as an alternative approach. A limitation of risk-sensitive and other distribution-dependent SOC methods \cite{bertsekas1996stochastic,crespo2003stochastic,whittle1981risk} is that these problem settings require the true system distribution to be known to compute expectations and variances. To address distributional uncertainty, previous study have also explored formulations based on the worst-case conditional value at risk (CVaR) \cite{kishida2022risk}.

However, DRC still faces significant computational challenges. DRC methods based on general distributional distance metrics, such as the optimal transport distance, typically lead to semi-infinite programming (SIP) \cite{reemtsen1991discretization,reemtsen2013semi}, which are difficult to solve \cite{yang2020wasserstein}. This issue also arises in distributionally robust optimization (DRO) problems \cite{reemtsen2013semi,shafieezadeh2019regularization,cherukuri2022data,mehrotra2014cutting,luo2017decomposition,mohajerin2018data,wiesemann2014distributionally}. Avoiding such SIP while guaranteeing robustness against distributional uncertainty is both theoretically and practically important. The worst-case CVaR-based linear-quadratic DRC method \cite{kishida2022risk} avoids SIP by using semidefinite programming; however, this requires fixing the first and second moments of all distributions.

The proposed method introduces a DRC formulation based on a penalty term associated with a specific distributional distance, which eliminates the need for SIP. The proposed method leverages mean--variance-type Bellman equations with respect to a reference distribution, thereby enabling the computation of distributionally robust controllers even when the distribution is unknown.

The contributions of this study are as follows:
\begin{itemize}
\item Formulation without SIP: We first address solutions of the DRO problems. We demonstrate that incorporating the variance of the cost is equivalent to optimizing the expected cost or its upper bound under the worst-case distribution with a distributional distance penalty. Furthermore, we show that solutions to the DRC for discounted cumulative costs in an infinite-horizon discrete-time setting with distributional uncertainty can be characterized by mean--variance-type Bellman equations. This formulation completely avoids SIP.  
\item Solution via the Riccati equation: 
For linear-quadratic control problems, the proposed method enables controller synthesis via solutions to DRC-type Riccati algebraic equations. Unlike prior studies \cite{yang2020wasserstein,petersen2002minimax,nishimura2021rat}, which established this property for continuous distributions, our approach extends the theory to discrete distributions, ensuring robustness under distributional uncertainty. This extension is validated throughout numerical experiments 
using an inverted pendulum on a cart \cite{kishida2022risk}.
\end{itemize}

This study partially builds on concepts from previous studies \cite{11107818,shida2025explicitreformulationdiscretedistributionally} while avoiding SIP. By using a continuously differentiable function as the distributional distance penalty, we reformulate the original min-max problem into a single-stage minimization, enabling efficient computation of the solution. Unlike previous studies, the method proposed in this paper relaxes the traditional ball constraint into a distance penalty and handles system dynamics constraints over a continuous state space and an infinite horizon. Unlike conventional DRC problems based on DRC-type Bellman equations \cite{liu2023data,neufeld2023markov,yu2023fast,liu2022distributionally}, the proposed method reduces DRC problems to mean--variance-type Bellman equations, which are single-layer minimization problems and structurally differ from traditional DRC formulations. 

The remainder of this paper is organized as follows.
Section \ref{sec:setting} introduces the problem settings. Section \ref{sec:proposed method} presents our proposed method. 
Section \ref{sec:non-global} shows its equivalence to a corresponding mean--variance-type minimization problem in the context of DRO. 
Section \ref{sec:global} addresses the nonlinear DRC settings, while Section \ref{sec:globalLQ} addresses the linear-quadratic cases \cite{kishida2022risk,whittle1981risk,taskesen2023distributionally,nishimura2021rat,mohajerin2018data,hajar2024distributionally}. 
The corresponding proofs are presented in Section \ref{sec:thm}. Section \ref{sec:experiment} verifies the theoretical results of the proposed method through numerical experiments, and Section \ref{sec:conc} concludes the paper. 

\section*{Notation}\label{sec:settingNotation}
The symbols $\real$, $\real^n$, and $\real^{n\times m}$ denote the sets of real numbers, $n$-dimensional real vectors, and real matrices of size $n\times m$, respectively. The set of all probability distributions on a set $\anySet$ is denoted by $\probability{\anySet}$. The probability of any condition $\anyCondition(\anyRandom)$ under distribution $\realProbHead(\anyRandom)$ is written as $\probValue{\realProbHead(\anyRandom)}{\anyCondition(\anyRandom)}$, where $\anyRandom$ is an arbitrary random variable. The expectation and variance of any function $\anyFunction(\anyRandom)$ are denoted as $\expectation{\realProbHead(\anyRandom)}{\anyFunction(\anyRandom)}$ and $\variance{\realProbHead(\anyRandom)}{\anyFunction(\anyRandom)}$, respectively. For any $\anyRandom\in\real$, $\anyRandom^+ = \max\{\anyRandom,\,0\}$. Let $I$ denotes the identity matrix. 

\section{Problem Settings}\label{sec:setting}
Section \ref{sec:settingDRO} presents  the DRO problem setting involving a single cost. Section \ref{sec:settingDRC} presents the DRC problem setting, which considers the discounted cumulative cost under the system dynamics constraint over an infinite horizon.

\subsection{DRO Problem}\label{sec:settingDRO}
We consider the DRO problem with a distributional distance penalty as follows:
\begin{equation}\label{eq:DRO}
\min_{\desVar\in\setDes} \quad \max_{\realProbHead\in\probability{\setRand}} \quad  \expectation{\realProb}{\cost{\randVar}{\desVar}}-\dParameter\,\distance.
\end{equation}
Here, $\costHead:\real^\dimRand\times\real^\dimDes\rightarrow\realPlus$ and $\distanceHead:\probability{\setRand}\times\probability{\setRand}\rightarrow\realPlus$ denote a cost function and a distributional distance function, respectively. The random variable $\randVar$ belongs to $\setRand\subseteq\real^\dimRand$, and the decision variable $\desVar$ belongs to $\setDes\subseteq\real^\dimDes$. The distributions $\refProbHead,\,\realProbHead\in\probability{\setRand}$ denote a reference distribution and a candidate worst-case distribution, respectively. The parameter $\dParameter\in\realPlusPlus$ is a positive coefficient associated with the distributional distance penalty.

\subsection{DRC Problems}\label{sec:settingDRC}
We define the DRC problem as the minimization of  the discounted cumulative cost with a distributional distance penalty over $(\desVar_0,\,\desVar_1,\,\cdots)\in\setDes^\infty$, where the objective function is defined as follows:
\begin{equation}\label{eq:DRC}
\begin{split}
&\cost{\statVar_0}{\desVar_0}
+\max_{(\realProbTHead{1},\,\realProbTHead{2},\,\cdots)\in\probability{\setRand}^\infty} \\
&\expectation{\prod_{t=1}^{\infty}\realProbT{t}}{\sum_{t=1}^{\infty}\alpha^t
\cost{\statVar_t}{\desVar_t}} 
-\dParameter\sum_{t=1}^{\infty}\alpha^{t-1}\distanceT{t}, \\
&\quad\quad\quad
\text{s.t.}\quad \statVar_{t+1}=f(\statVar_{t},\,\desVar_{t},\,\randVar_{t+1}). \raisetag{11pt}
\end{split}
\end{equation}
Here, $\statVar_t\in\setStat\subseteq\real^\dimStat$, $\randVar_t\in\setRand\subseteq\real^\dimRand$, and $\desVar_t\in\setDes\subseteq\real^\dimDes$ represent the state variable, random variable, and decision variable at each time $t\in\{0,\,1,\,\cdots\}$, respectively. The initial state is denoted by $\statVar_0\in\setStat$. The distribution $\realProbT{t}$ is a candidate worst-case distribution of $\randVar_t$ at time $t$. 
While $\randVar_t$ is independent of $\randVar_s$ ($s\neq t$), the distributions $\realProbT{t}$ and $\realProbT{s}$ are not necessarily identical. The parameter $\alpha\in(0,\,1)$ is a discount factor. The function $\driftHead:\setStat\times\setDes\times\setRand\rightarrow\setStat$  represents the system dynamics.

Furthermore, 
(\ref{eq:DRC}) can be reformulated into the corresponding Bellman equation as follows:
\begin{equation}\label{eq:DRCBellman}
\begin{split}
\valueDRC &= \min_{\desVar\in\setDes}\quad\cost{\statVar}{\desVar} 
+\max_{\realProbHead\in\probability{\setRand}}\quad \\
&\alpha\,\expectation{\realProb}{\valueDRCHead(\drift)} 
 -\dParameter\distance. \\
\end{split}
\end{equation}
This reformulation can be stated as the following proposition by using the value function $\valueDRCHead:\setStat\rightarrow\realPlus$.

\begin{prop}[Bellman Equations of DRC Problems]\label{prop:DRC}
Suppose that there exists a value function $\valueDRC$ that satisfies the Bellman equation (\ref{eq:DRCBellman}), that $\desVar^*(\statVar)$ is a minimizer to (\ref{eq:DRCBellman}) for every $\statVar\in\setStat$, and that the discount factor $\alpha$ is sufficiently small such that $\lim_{T\rightarrow\infty}\alpha^T\expectation{\prod_{t=1}^{T}\realProbT{t}}{\valueDRCHead(\statVar_T)}=0$ for $\desVar_{t}=\desVar^*(\statVar_t)$ and any $\realProbT{t+1}\in\probability{\setRand}$, for all $t\in\{0,\,1,\,\cdots\}$, under $\statVar_{t+1}=f(\statVar_{t},\,\desVar_{t},\,\randVar_{t+1})$. Then, the DRC problem (\ref{eq:DRC}) satisfies the following properties:
\begin{enumerate}
\renewcommand{\labelenumi}{(\roman{enumi})}
\item Value function $\valueDRCHead(\statVar_0)$ denotes the unique optimal worst-case value of the discounted cumulative cost (\ref{eq:DRC}).
\item The minimizer to (\ref{eq:DRC}) is $(\desVar^*(\statVar_0),\,\desVar^*(\statVar_1),\,\cdots)$.
\end{enumerate}  
\end{prop}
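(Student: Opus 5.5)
We prove the proposition by a standard verification argument: unroll the Bellman equation (\ref{eq:DRCBellman}) over a finite horizon $T$, then let $T\to\infty$ using the stated decay assumption. Fix $\statVar_0$. For any policy sequence $(\desVar_0,\desVar_1,\dots)$ and any sequence $(\realProbTHead{1},\realProbTHead{2},\dots)$, define the truncated objective
\begin{equation*}
J_T=\cost{\statVar_0}{\desVar_0}+\mathbb{E}\Bigl[\sum_{t=1}^{T-1}\alpha^t\cost{\statVar_t}{\desVar_t}\Bigr]-\dParameter\sum_{t=1}^{T}\alpha^{t-1}\distanceT{t}+\alpha^T\mathbb{E}\bigl[\valueDRCHead(\statVar_T)\bigr].
\end{equation*}
Here the expectation is under $\prod_{t=1}^{T}\realProbT{t}$, and the dynamics are $\statVar_{t+1}=f(\statVar_t,\desVar_t,\randVar_{t+1})$. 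The whole argument consists of comparing $J_T$ with $\valueDRCHead(\statVar_0)$ in two directions.

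\emph{Achievability, i.e.\ the upper bound for $\desVar^*$.} Take $\desVar_t=\desVar^*(\statVar_t)$. The Bellman equation at $\statVar_t$ gives, for every $\realProbHead_{t+1}$,
\begin{equation*}
\valueDRCHead(\statVar_t)\ge\cost{\statVar_t}{\desVar^*(\statVar_t)}+\alpha\,\mathbb{E}_{\realProbHead_{t+1}}\bigl[\valueDRCHead(\statVar_{t+1})\bigr]-\dParameter\distanceT{t+1}.
\end{equation*}
We multiply this by $\alpha^t$, take the expectation over $\randVar_1,\dots,\randVar_t$ (independence makes the conditional step legitimate), and telescope from $t=0$ to $T-1$. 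This yields $\valueDRCHead(\statVar_0)\ge J_T$, and hence, since $c\ge0$,
\begin{equation*}
\valueDRCHead(\statVar_0)\ge\cost{\statVar_0}{\desVar_0}+\mathbb{E}\Bigl[\sum_{t=1}^{T-1}\alpha^t c\Bigr]-\dParameter\sum_{t=1}^{T}\alpha^{t-1}d+\alpha^T\mathbb{E}[\valueDRCHead(\statVar_T)].
\end{equation*}
Letting $T\to\infty$, the tail term vanishes by the hypothesis, and the cost sum converges monotonically. The $d$-sum is also monotone, so the limit of the difference is well defined; if the cost part diverges while $d$ is finite, the inequality shows this cannot happen. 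Taking the supremum over $(\realProbTHead{t})$ gives that the worst-case value under $\desVar^*$ is at most $\valueDRCHead(\statVar_0)$.

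\emph{Equality under $\desVar^*$ and optimality.} For the reverse direction we need, at each $\statVar_t$, a near-maximizer $\realProbHead_{t+1}^\varepsilon$ of the inner problem. We choose it within $\varepsilon\,2^{-t}$ of optimal; the maximum is assumed attained in (\ref{eq:DRCBellman}), but approximate maximizers suffice. Note that the choice may depend on $\statVar_t$, whereas (\ref{eq:DRC}) lets $\realProbHead_t$ be chosen as a plain distribution. This is the main obstacle: matching the adversary's information structure. We handle it by reading the maximization in (\ref{eq:DRC}) as sequential, i.e.\ state-feedback, which is the interpretation under which the Bellman form (\ref{eq:DRCBellman}) is a reformulation at all.

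With this choice, telescoping gives $J_T\ge\valueDRCHead(\statVar_0)-2\varepsilon$. Along any policy $(\desVar_t)$, the minimization in the Bellman equation gives
\begin{equation*}
\valueDRCHead(\statVar_t)\le\cost{\statVar_t}{\desVar_t}+\max_{\realProbHead}\{\cdots\},
\end{equation*}
so the adversary playing $\varepsilon$-maximizers against an arbitrary policy forces a worst-case value of at least $\valueDRCHead(\statVar_0)-2\varepsilon$. One caveat: the decay hypothesis is stated only for $\desVar^*$. For a general policy we bound the tail from below by $\alpha^T\mathbb{E}[\valueDRCHead(\statVar_T)]\ge0$, using $\valueDRCHead\ge0$, which is exactly the direction needed.

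Combining the two directions, $\desVar^*$ attains $\valueDRCHead(\statVar_0)$ and no policy does better, which proves (ii). The optimal worst-case value is therefore $\valueDRCHead(\statVar_0)$, and this value is unique since it is defined as an inf-sup, which proves (i).
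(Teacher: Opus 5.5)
Your approach is the same verification argument the paper uses: unroll (\ref{eq:DRCBellman}) along the dynamics and discard the tail. Your achievability half is correct. It does not even need the decay hypothesis, because $\valueDRCHead\ge0$ lets you drop $\alpha^T\mathbb{E}[\valueDRCHead(\statVar_T)]$ from $\valueDRCHead(\statVar_0)\ge J_T$.

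The gap is in the optimality half. For an arbitrary policy facing the $\varepsilon$-maximizing adversary, telescoping gives $S_T+\alpha^T\mathbb{E}[\valueDRCHead(\statVar_T)]\ge\valueDRCHead(\statVar_0)-2\varepsilon$, where $S_T=J_T-\alpha^T\mathbb{E}[\valueDRCHead(\statVar_T)]$ is the truncated objective of (\ref{eq:DRC}). To conclude that the objective is at least $\valueDRCHead(\statVar_0)-2\varepsilon$, you need an \emph{upper} bound on the tail, i.e.\ $\liminf_{T\to\infty}\alpha^T\mathbb{E}[\valueDRCHead(\statVar_T)]=0$ along that policy. The bound $\alpha^T\mathbb{E}[\valueDRCHead(\statVar_T)]\ge0$ only gives $S_T\le J_T$. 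That is the direction used in the achievability step, not the one needed here. So ``no policy does better'' is unproved.

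This cannot be patched under the stated hypotheses. Take the following example:
\begin{itemize}
\item $\setStat=[0,\infty)$, $\setDes=\{0,1\}$, $\cost{\statVar}{\desVar}=|\desVar\,\statVar|$;
\item $\drift=2(1-\desVar)\statVar/\alpha$, independent of $\randVar$.
\end{itemize}
With $\distanceHead(\refProbHead,\refProbHead)=0$, the adversary's best reply is $\realProbHead=\refProbHead$ and everything is deterministic. Then $\valueDRCHead(\statVar)=\statVar$ solves (\ref{eq:DRCBellman}), since $\min\{2\statVar,\statVar\}=\statVar$, with minimizer $\desVar^*\equiv1$. Along $\desVar^*$ one has $\statVar_t=0$ for $t\ge1$, so the decay hypothesis holds. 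Yet $\desVar_t\equiv0$ incurs zero cost, so the optimal value is $0<\statVar_0=\valueDRCHead(\statVar_0)$, which contradicts (i) and (ii). Along this policy $\alpha^T\valueDRCHead(\statVar_T)=2^T\statVar_0$, which is exactly the uncontrolled tail.

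The paper's proof skips the same point: it substitutes $\desVar^*$ and then simply declares it a minimizer. The fix is to strengthen the hypothesis to the transversality condition $\liminf_{T}\alpha^T\mathbb{E}[\valueDRCHead(\statVar_T)]=0$ along every policy with finite worst-case cost and every adversary strategy. Bounded $\valueDRCHead$ is one sufficient condition. With that in place, your telescoping argument goes through verbatim.

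Your sequential (state-feedback) reading of the adversary is needed, and the paper's unrolling tacitly assumes it too. It has two consequences you should state. First, the decay hypothesis must also be imposed for state-dependent $\realProbHead_{t+1}$, since the stated one covers only fixed distributions. Second, the penalty becomes $\mathbb{E}[\distanceHead(\refProbHead,\realProbHead_{t}(\cdot\mid\statVar_{t-1}))]$.
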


\begin{proof}[Proof of Proposition \ref{prop:DRC}]
This proof relies on different assumptions but can be established based on \cite[Theorem 2.7 (iii)]{neufeld2023markov}. In particular, the value function $\valueDRCHead(\statVar_0)$ that satisfies (\ref{eq:DRCBellman}) can be inductively rewritten by substituting $\statVar=\statVar_0$, $\desVar=\desVar_0$, $\desVar_t=\desVar^*(\statVar_t)$, $\realProbHead(\randVar)=\realProbHead_1(\randVar_1)$, and $f(\statVar_{t},\,\desVar_t,\,\randVar_{t+1})=\statVar_{t+1}$ for every $t\in\{1,\,2,\,\cdots\}$, as follows:
\begin{equation*}
\begin{split}
&\valueDRCHead(\statVar_0)=
\cost{\statVar_0}{\desVar^*(\statVar_0)}+\max_{(\realProbTHead{1},\,\realProbTHead{2},\,\cdots)\in\probability{\setRand}^\infty} \\
&\lim_{T\rightarrow\infty}\expectation{\prod_{t=1}^{T}\realProbT{t}}{\sum_{t=1}^{T-1}\alpha^t
\cost{\statVar_t}{\desVar^*(\statVar_t)}+\alpha^T\valueDRCHead(\statVar_T)} \\
&-\dParameter\sum_{t=1}^{T}\alpha^{t-1}\distanceT{t}.
\end{split}
\end{equation*}
From the assumptions introduced in Proposition \ref{prop:DRC}, we can replace $\lim_{T\rightarrow\infty}\alpha^T\expectation{\prod_{t=1}^{T}\realProbT{t}}{\valueDRCHead(\statVar_T)}$ with zero, and $(\desVar^*(\statVar_0),\,\desVar^*(\statVar_1),\,\cdots)$ becomes a minimizer over $(\desVar_0,\,\desVar_1,\,\cdots)\in\setDes^\infty$ in the last equation. 
The statements (i) and (ii) can be proven individually.
\end{proof}


\section{Proposed Method}\label{sec:proposed method}
Sections \ref{sec:non-global} and \ref{sec:global} reformulate the DRO and DRC problems as single-layer mean--variance minimization and mean--variance-type control problems, respectively. These reformulations improve tractability. Section \ref{sec:globalLQ} presents a special case of DRC reformulation that can be solved entirely using algebraic equations.

\subsection{Computing Solutions to DRO problem}\label{sec:non-global}
This section aims to compute solutions to the DRO problem (\ref{eq:DRO}) in Section \ref{sec:settingDRO}. To make the problem more tractable, we consider the following mean--variance minimization problem with respect to the reference distribution:
\begin{equation}\label{eq:MV}
\min_{\desVar\in\setDes} \quad \expectation{\refProb}{\cost{\randVar}{\desVar}}+\frac{1}{4\dParameter}\variance{\refProb}{\cost{\randVar}{\desVar}}.
\end{equation}
 
We now introduce the following theorem: The DRO problem (\ref{eq:DRO}) can be reformulated into the equivalent mean--variance minimization problem (\ref{eq:MV}). 
\begin{thm}[Reformulation of DRO Problem]\label{thm:DRO}
Suppose that there exists a finite set $\setRand_N=\{\randVar^1,\,\cdots,\,\randVar^N\}$ such that $\setRand=\setRand_N$, and that the distance $\distanceHead$ in (\ref{eq:DRO}) is defined as: 
\begin{equation}\label{eq:penalty}
\distance\coloneq\expectation{\refProb}{\left(1-\frac{\realProb}{\refProb}\right)^2}.
\end{equation}
Then, the optimization problems (\ref{eq:DRO}) and (\ref{eq:MV}) satisfy the following properties:
\begin{enumerate}
\renewcommand{\labelenumi}{(\roman{enumi})}
\item For any decision variable $\desVar\in\setDes$, the objective in (\ref{eq:MV}) provides an upper bound for the objective in (\ref{eq:DRO}) as follows:
\begin{multline}\label{eq:upper}
\expectation{\refProb}{\cost{\randVar}{\desVar}}+\frac{1}{4\dParameter}\variance{\refProb}{\cost{\randVar}{\desVar}} \\ \geq \max_{\realProbHead\in\probability{\setRand}} \quad  \expectation{\realProb}{\cost{\randVar}{\desVar}}-\dParameter\,\distance.
\end{multline}
\item For any decision variable $\desVar\in\setDes$, if 
\begin{equation}\label{eq:upper2}
\min_{\randVar\in\setRand}\cost{\randVar}{\desVar}-\expectation{\refProb}{\cost{\randVar}{\desVar}}+2\dParameter>0,
\end{equation}
then the equality stated in the equation (\ref{eq:upper}) holds.
\end{enumerate}
\end{thm}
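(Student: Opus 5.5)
The plan is to reduce (\ref{eq:DRO}) to a finite-dimensional concave quadratic maximization over likelihood ratios and then complete the square. Since $\setRand=\setRand_N$ is finite, every $\realProbHead\in\probability{\setRand}$ is a probability vector. I will assume, as is implicit in (\ref{eq:penalty}), that $\refProbHead(\randVar^i)>0$ for every $i$, so that the ratio $\realProbHead/\refProbHead$ is well defined.

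Fix $\desVar\in\setDes$. Write $r(\randVar)=\realProb/\refProb$ and $h(\randVar)=r(\randVar)-1$. Define the centered cost $\bar c(\randVar)=\cost{\randVar}{\desVar}-\expectation{\refProb}{\cost{\randVar}{\desVar}}$. The map $\realProbHead\mapsto h$ is a bijection between $\probability{\setRand}$ and the set
\[
\mathcal{H}=\{h:\setRand\to\real \mid \expectation{\refProb}{h(\randVar)}=0,\ h(\randVar)\geq -1 \ \forall \randVar\in\setRand\}.
\]
Under this change of variables, $\expectation{\realProb}{\cost{\randVar}{\desVar}}=\expectation{\refProb}{\cost{\randVar}{\desVar}}+\expectation{\refProb}{\bar c(\randVar)h(\randVar)}$. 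Here I used $\expectation{\refProb}{h}=0$ to replace $\costHead$ by $\bar c$ in the cross term. Also $\distance=\expectation{\refProb}{h(\randVar)^2}$. Hence the inner objective of (\ref{eq:DRO}) equals
\[
\expectation{\refProb}{\cost{\randVar}{\desVar}}+\expectation{\refProb}{\bar c(\randVar)h(\randVar)-\dParameter h(\randVar)^2}.
\]

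For (i), I complete the square pointwise:
\[
\bar c h-\dParameter h^2=\frac{\bar c^2}{4\dParameter}-\dParameter\left(h-\frac{\bar c}{2\dParameter}\right)^2\leq\frac{\bar c^2}{4\dParameter}.
\]
Taking $\expectation{\refProb}{\cdot}$ gives, for every $h\in\mathcal{H}$ (in fact for every $h$ whatsoever), the bound $\expectation{\refProb}{\bar c h-\dParameter h^2}\leq \frac{1}{4\dParameter}\variance{\refProb}{\cost{\randVar}{\desVar}}$. Taking the maximum over $\realProbHead$ then yields (\ref{eq:upper}). The maximum is attained because $\probability{\setRand_N}$ is a compact simplex and the objective is continuous.

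For (ii), the bound is tight exactly at the unconstrained maximizer $h^*=\bar c/(2\dParameter)$, so I only need to check that $h^*\in\mathcal{H}$. First, $\expectation{\refProb}{h^*}=0$ holds automatically since $\bar c$ is centered. Second, $h^*(\randVar)\geq-1$ is equivalent to $\cost{\randVar}{\desVar}-\expectation{\refProb}{\cost{\randVar}{\desVar}}+2\dParameter\geq0$ for all $\randVar\in\setRand$. This follows from (\ref{eq:upper2}), which even gives strictly positive weights. Thus
\[
\realProbHead^*(\randVar)=\refProb\left(1+\frac{\bar c(\randVar)}{2\dParameter}\right)
\]
is a valid distribution: it is nonnegative and sums to $1$. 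It attains the upper bound, so equality in (\ref{eq:upper}) holds.

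The only delicate point is the feasibility constraint $r\geq0$, which is precisely where condition (\ref{eq:upper2}) enters. Without it the maximizer would sit on the boundary of the simplex and the inequality could be strict. The rest is routine algebra.
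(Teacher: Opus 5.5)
Your proof is correct, but it takes a different route from the paper.

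\textbf{The paper's route.} The paper treats the constraint $\sum_i p^i=1$ through a Lagrange multiplier $s$ and invokes Slater's condition for strong duality. It maximizes in closed form over $p\in[0,\infty)^N$, obtaining $p^{i*}=\frac{p_0^i}{2\gamma}\bigl(c(w^i,u)+2\gamma-s\bigr)^+$. It then drops the $(\cdot)^+$ to get a convex quadratic upper bound in $s$, which is minimized at $s^*=\mathbb{E}_{P_0}[c]$. For (ii), it argues that the clipping is inactive at $s^*$ under (\ref{eq:upper2}), so $s^*$ also minimizes the exact dual function.

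\textbf{Your route.} You stay entirely in the primal. After reparametrizing by the centered likelihood ratio $h$, completing the square pointwise gives the bound for every $h$. For (ii) you only check that the unconstrained maximizer $h^*=\bar c/(2\gamma)$ is feasible. This exhibits the worst-case distribution $P^*=P_0\bigl(1+\bar c/(2\gamma)\bigr)$ explicitly, and it coincides with the paper's $p^{i*}$ at $s=s^*$.

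\textbf{What each buys.} Your argument is more elementary in three ways.
\begin{itemize}
\item It needs no strong duality.
\item It avoids the paper's somewhat delicate last step: showing that $s^*$, which minimizes the relaxed bound, also minimizes the clipped dual function.
\item It shows that (\ref{eq:upper2}) is exactly the nonnegativity of $P^*$, so the non-strict version of the inequality already suffices.
\end{itemize}
The paper's dual route gives in exchange an exact one-dimensional convex representation of the inner maximum,
$\inf_{s}\bigl\{\gamma\,\mathbb{E}_{P_0}\bigl[\bigl((c+2\gamma-s)^+/(2\gamma)\bigr)^2\bigr]-\gamma+s\bigr\}$.
This stays valid when (\ref{eq:upper2}) fails and the worst case lies on the boundary of the simplex. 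It also follows the standard duality template that carries over to other distributional penalties.
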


\begin{rem}[Proof of Theorem \ref{thm:DRO}]
See Section \ref{sec:thm}.
\end{rem}

\begin{rem}[Equivalence Between DRO and Mean--Variance Minimization Problems]
Theorem \ref{thm:DRO} clarifies the relationship between the DRO and mean--variance minimization problems: the statement (i) shows that the mean--variance minimization problem provides an upper bound for the DRO problem, while the statement (ii) shows that 
the mean and variance of the cost coincide with the maximum cost under the distance penalty   
when $\dParameter$ is sufficiently large. 
\end{rem}

\subsection{Computing Solutions to DRC}\label{sec:global}
This section computes solutions to DRC problem in Section \ref{sec:settingDRC}. To make the problem more tractable, we consider the mean--variance-type control problem that minimizes the following objective over $(\desVar_0,\,\desVar_1,\,\cdots)\in\setDes^\infty$.
\begin{equation}\label{eq:MVC}
\begin{split}
&\cost{\statVar_0}{\desVar_0}
+\rho^{(1,\,\infty)}\left(\sum_{t=1}^{\infty}\alpha^t
\cost{\statVar_t}{\desVar_t}\right) \\
&\quad\quad\quad\quad\quad\quad
\text{s.t.}\quad \statVar_{t+1}=f(\statVar_{t},\,\desVar_{t},\,\randVar_{t+1}),\raisetag{12pt}
\end{split}
\end{equation}
where $\rho^{(i,\,j)}\coloneq(\rho_{i} \circ
\rho_{i+1} \circ \cdots \circ \rho_{j})
$ denotes the composition of the functions used to compute the mean and variance of the cost as follows:
\begin{multline}\label{eq:mean--variance}
\rho_{i}\left(\sum_{t=1}^{\infty}\alpha^t
\cost{\statVar_t}{\desVar_t}\right) 
\coloneq
\expectation{\refProbHead(\randVar_i)}{\sum_{t=1}^{\infty}\alpha^t
\cost{\statVar_t}{\desVar_t}} \\
+\frac{1}{4\dParameter\,\alpha^{i-1}}\variance{\refProbHead(\randVar_i)}{\sum_{t=1}^{\infty}\alpha^t
\cost{\statVar_t}{\desVar_t}}.
\end{multline}

\begin{rem}[Mean--Variance Function]
The variance term of $\rho_{i}$ has a $1/\alpha^{i-1}$ coefficient, but all variance terms of order lower than $i$ vanish; namely, $\variance{\refProbHead(\randVar_i)}{\sum_{t=1}^{\infty}\alpha^t\cost{\statVar_t}{\desVar_t}}=\variance{\refProbHead(\randVar_i)}{\sum_{t=i}^{\infty}\alpha^{t}\cost{\statVar_{t}}{\desVar_{t}}}$. Thus, the output of $\rho^{(1,\,\infty)}$ in  (\ref{eq:MVC}) does not diverge due to the $1/\alpha^{i-1}$ coefficient. 
\end{rem}

Furthermore, 
(\ref{eq:MVC}) can be reformulated as the corresponding Bellman equation as follows:
\begin{equation}\label{eq:MVBellman}
\begin{split}
\valueMV =& \min_{\desVar\in\setDes}\quad
\cost{\statVar}{\desVar} +\alpha\,\expectation{\refProb}{\valueMVHead(\drift)} \\
&+\frac{1}{4\dParameter}\,\variance{\refProb}{\alpha\,\valueMVHead(\drift)}.
\end{split}
\end{equation}
This reformulation can be stated as the following proposition by using the value function $\valueMVHead:\setStat\rightarrow\realPlus$. 

\begin{prop}[Bellman Equations of Mean--Variance-Type Control Problems]\label{prop:MVC}
Suppose that there exists a value function $\valueMV$ that satisfies the mean--variance-type Bellman equation (\ref{eq:MVBellman}), that $\desVar^*(\statVar)$ is a minimizer to (\ref{eq:MVBellman}) for every $\statVar\in\setStat$, and that the discount factor $\alpha$ is sufficiently small such that $\lim_{T\rightarrow\infty}\rho^{(1,\,T)}\left(\sum_{t=1}^{T}\alpha^t
\cost{\statVar_t}{\desVar_t}+\alpha^{T+1}\valueMVHead(\statVar_{T+1})\right)=\rho^{(1,\,\infty)}\left(\sum_{t=1}^{\infty}\alpha^t
\cost{\statVar_t}{\desVar_t}\right)$, for $\desVar_{t}=\desVar^*(\statVar_{t})$, for all $t\in\{0,\,1,\,\cdots\}$, under $\statVar_{t+1}=f(\statVar_{t},\,\desVar_{t},\,\randVar_{t+1})$. Subsequently, the mean--variance-type control problem (\ref{eq:MVC}) satisfies the following properties:
\begin{enumerate}
\renewcommand{\labelenumi}{(\roman{enumi})}
\item Value function $\valueMVHead(\statVar_0)$ denotes the unique optimal mean--variance of the discounted cumulative cost (\ref{eq:MVC}).
\item The minimizer to (\ref{eq:MVC}) is $(\desVar^*(\statVar_0),\,\desVar^*(\statVar_1),\,\cdots)$.
\end{enumerate}
\end{prop}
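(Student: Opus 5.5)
The plan mirrors the proof of Proposition \ref{prop:DRC}: unroll the Bellman equation (\ref{eq:MVBellman}) along the trajectory, then use the terminal-vanishing assumption to identify $\valueMVHead(\statVar_0)$ with the objective (\ref{eq:MVC}). The key structural fact is that $\rho_i$ acts only through $\randVar_i$ via $\refProbHead(\randVar_i)$. Quantities determined by $\randVar_1,\dots,\randVar_{i-1}$ are therefore constants inside $\rho_i$. They pass through the expectation additively and do not enter the variance, so $\rho_i(a+Z)=a+\rho_i(Z)$ for $a$ independent of $\randVar_i$. Also, because $\alpha\valueMVHead$ is scaled by $\alpha^{i-1}$ at depth $i$, one has
\begin{equation*}
\frac{1}{4\dParameter\,\alpha^{i-1}}\,\variance{\refProbHead(\randVar_i)}{\alpha^{i-1}\cdot\alpha\,\valueMVHead(\statVar_i)}=\alpha^{i-1}\,\frac{1}{4\dParameter}\,\variance{\refProbHead(\randVar_i)}{\alpha\,\valueMVHead(\statVar_i)}.
\end{equation*}
Hence $\rho_i\bigl(\alpha^{i}\valueMVHead(\statVar_i)\bigr)=\alpha^{i-1}\bigl(\alpha\,\expectation{}{\valueMVHead}+\frac{1}{4\dParameter}\variance{}{\alpha\valueMVHead}\bigr)$. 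This is exactly $\alpha^{i-1}$ times the non-stage part of (\ref{eq:MVBellman}) at $\statVar_{i-1}$.

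\textbf{Step 1 (induction).} I will show by induction on $T$ that for any policy,
\begin{equation*}
\valueMVHead(\statVar_0)\le \cost{\statVar_0}{\desVar_0}+\rho^{(1,\,T)}\Bigl(\sum_{t=1}^{T}\alpha^t\cost{\statVar_t}{\desVar_t}+\alpha^{T+1}\valueMVHead(\statVar_{T+1})\Bigr),
\end{equation*}
with equality when $\desVar_t=\desVar^*(\statVar_t)$. To pass from $T$ to $T+1$, I will apply (\ref{eq:MVBellman}) at $\statVar_{T+1}$ to bound $\alpha^{T+1}\valueMVHead(\statVar_{T+1})$. This gives $\alpha^{T+1}$ times the stage cost plus $\alpha^{T}$ times the mean--variance of $\alpha\valueMVHead(\statVar_{T+2})$, which by the scaling identity equals $\rho_{T+2}(\alpha^{T+2}\valueMVHead(\statVar_{T+2}))$. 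The translation property then lets me pull the earlier-stage costs inside $\rho_{T+2}$.

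Monotonicity of $\rho^{(1,T)}$ is needed to carry the inequality through the outer compositions. This is the main obstacle, because the mean--variance functional is not monotone in general. For the equality case (optimal policy) no monotonicity is needed, since only identities are substituted; this gives $\valueMVHead(\statVar_0)$ equal to the objective at $\desVar^*$.

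\textbf{Step 2 (limit).} For the optimal policy I take $T\to\infty$ and use the stated limit assumption. This gives $\valueMVHead(\statVar_0)=\cost{\statVar_0}{\desVar^*(\statVar_0)}+\rho^{(1,\infty)}(\cdots)$.

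\textbf{Step 3 (optimality).} For optimality against arbitrary policies, I will avoid global monotonicity and argue stagewise. At each depth the decision $\desVar_t$ affects only the inner function of $\randVar_{t+1}$, which is conditioned on past noises, and the Bellman minimization is pointwise in $\statVar_t$. Processing the composition from the innermost $\rho$ outward, I will replace each suboptimal tail by its Bellman value, mimicking the dynamic-programming proof in \cite[Theorem 2.7 (iii)]{neufeld2023markov} as done for Proposition \ref{prop:DRC}.

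Uniqueness of the optimal value in (i) then follows because both the value and the minimizer in (ii) are characterized by the same limit identity. I expect the delicate point to be justifying this interchange, which the paper's standing assumption essentially provides.
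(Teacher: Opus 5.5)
Your Steps 1 and 2 coincide with the paper's proof. You substitute the Bellman identity along $\desVar_t=\desVar^*(\statVar_t)$, unroll it using the translation property of $\rho_i$ and your (correct) scaling identity, and invoke the limit hypothesis to obtain that $V(\statVar_0)$ equals the objective (\ref{eq:MVC}) of $\desVar^*$. The paper then simply asserts that $\desVar^*$ ``becomes a minimizer.'' Your Step 3 tries to justify that assertion, and this is where the proposal fails.

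\textbf{The stagewise replacement is monotonicity.} Replacing a competitor's tail $G_{t+1}\ge\alpha^{t+1}V(\statVar_{t+1})$ by $\alpha^{t+1}V(\statVar_{t+1})$ inside $\rho_{t+1}$, and concluding the value does not increase, \emph{is} monotonicity of $\rho_{t+1}$. Arguing stagewise does not avoid the obstacle you flagged; it is that obstacle. Under the stated hypotheses the claim can actually be false:
\begin{itemize}
\item Let $\statVar_0$ have zero stage cost, and let $\randVar_1$ send the state with probability $1/2$ each to $g$ or $b$.
\item At $b$ every action costs $M$. At $g$, action $0$ costs $0$ and action $1$ costs $M$.
\item Afterwards the state is absorbed in a zero-cost state.
\end{itemize}
Then $V(g)=0$, $V(b)=M$, and $V(\statVar_0)=\alpha M/2+\alpha^2M^2/(16\dParameter)$. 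The terminal condition holds for every policy. Yet playing action $1$ at $g$ gives objective $\alpha M$, which is strictly smaller once $\alpha M>8\dParameter$.

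\textbf{A possible repair.} Theorem \ref{thm:DRO} supplies a one-sided monotonicity. Write $\rho_{t+1}(\alpha^tY)=\alpha^t\bigl(\mathbb{E}[Y]+\frac{1}{4\dParameter}\mathbb{V}[Y]\bigr)$ and $\phi(Y)=\max_{\realProbHead}\mathbb{E}_{\realProbHead}[Y]-\dParameter\,\distance$, which is monotone in $Y$.
\begin{itemize}
\item Part (i) gives $\rho_{t+1}(G)\ge\alpha^t\phi(G/\alpha^t)$ for every $G$.
\item Part (ii) gives equality at $G=\alpha^{t+1}V(\statVar_{t+1})$ under (\ref{eq:assumption}).
\end{itemize}
Hence $\rho_{t+1}(G_{t+1})\ge\rho_{t+1}(\alpha^{t+1}V(\statVar_{t+1}))$, and the backward induction closes. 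This requires finite support and (\ref{eq:assumption}), which are hypotheses of Corollary \ref{cor:equivalence}, not of this proposition.

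\textbf{The limit hypothesis does not cover competitors.} Your expectation that ``the paper's standing assumption essentially provides'' the limit interchange is mistaken. That hypothesis concerns only $\desVar_t=\desVar^*(\statVar_t)$. Comparing with a competitor needs the terminal term $\alpha^TV(\statVar_T)$ to be negligible along the competitor. This fails even without noise, and even when (\ref{eq:assumption}) holds trivially with $\setRand=\{0\}$:
\begin{itemize}
\item Take $\statVar_{t+1}=2\statVar_t+\desVar_t$, $\cost{\statVar}{\desVar}=\desVar^2$, $\alpha=1/2$.
\item $V(\statVar)=2\statVar^2$ solves (\ref{eq:MVBellman}).
\item Its greedy policy $\desVar=-\statVar$ satisfies the limit hypothesis and has cost $2\statVar_0^2$.
\item The policy $\desVar\equiv0$ costs $0$.
\end{itemize}
So you also need a transversality condition over all admissible policies, or a minimal-solution argument.
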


\begin{proof}[Proof of Proposition \ref{prop:MVC}]
This proof differs from \cite{neufeld2023markov}[Theorem 2.7 (iii)] in that it is associated with the mean--variance-type Bellman equation: The value function $\valueMVHead(\statVar_0)$ that satisfies (\ref{eq:MVBellman}) can inductively be rewritten by substituting $\statVar=\statVar_0$, $\desVar=\desVar_0$, $\desVar_t=\desVar^*(\statVar_0)$, $\randVar=\randVar_1$, and $f(\statVar_{t},\,\desVar_t,\,\randVar_{t+1})=\statVar_{t+1}$ for every $t\in\{1,\,2,\,\cdots\}$, as follows:
\begin{equation*}
\begin{split}
&\valueMVHead(\statVar_0)=
\cost{\statVar_0}{\desVar^*(\statVar_0)} \\
&+\lim_{T\rightarrow\infty}\rho^{(1,\,T)}\left(\sum_{t=1}^{T}\alpha^t
\cost{\statVar_t}{\desVar^*(\statVar_t)}+\alpha^{T+1}\valueMVHead(\statVar_{T+1})\right).\end{split}
\end{equation*}
From the assumption introduced in Proposition \ref{prop:MVC}, we can replace the second term in the right-hand side of the last equation with $\rho^{(1,\,\infty)}\left(\sum_{t=1}^{\infty}\alpha^t
\cost{\statVar_t}{\desVar_t}\right)$, and $(\desVar^*(\statVar_0),\,\desVar^*(\statVar_1),\,\cdots)$ becomes a minimizer over $(\desVar_0,\,\desVar_1,\,\cdots)\in\setDes^\infty$ in the last equation. Thus, the statements (i) and (ii)  can be proven individually. 
\end{proof}

From Theorem \ref{thm:DRO} (ii), we obtain the following corollary. 
\begin{cor}[Reformulations of DRC Problems]\label{cor:equivalence}
Suppose that the assumptions introduced in Theorem \ref{thm:DRO} and Propositions \ref{prop:DRC} and \ref{prop:MVC} hold, and that $\dParameter$ is sufficiently large such that the following inequality holds for any $(\statVar,\,\desVar)\in\setStat\times\setDes$. 
\begin{equation}\label{eq:assumption}
\begin{split}
&\forall\valueMVHead^\prime\in\{\valueDRCHead,\,\valueMVHead\}, \quad\min_{\randVar\in\setRand}\alpha\,\valueMVHead^\prime(\drift)\\
&\quad\quad\quad-\alpha\,\expectation{\refProb}{\valueMVHead^\prime(\drift)}+2\dParameter>0.
\end{split}
\end{equation}
Then, the following properties hold.
\begin{enumerate}
\renewcommand{\labelenumi}{(\roman{enumi})}
\item The Bellman equations (\ref{eq:DRCBellman}) and (\ref{eq:MVBellman}) are identical.
\item The DRC problem (\ref{eq:DRC}) and the  mean--variance-type control problem (\ref{eq:MVC}) are identical.
\end{enumerate}  
\end{cor}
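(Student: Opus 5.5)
The plan is to reduce the corollary to Theorem \ref{thm:DRO} applied pointwise in $(\statVar,\,\desVar)$, with the cost function $\cost{\randVar}{\desVar}$ of (\ref{eq:DRO}) replaced by $\randVar\mapsto\alpha\,\valueMVHead^\prime(\drift)$. Fix $(\statVar,\,\desVar)\in\setStat\times\setDes$ and a function $\valueMVHead^\prime\in\{\valueDRCHead,\,\valueMVHead\}$, and define $g(\randVar)\coloneq\alpha\,\valueMVHead^\prime(\drift)$. Since $\valueMVHead^\prime$ maps into $\realPlus$, $g$ is a nonnegative function on the finite set $\setRand=\setRand_N$, so it is an admissible "cost" in Theorem \ref{thm:DRO}; the decision variable plays no role there beyond parametrizing $g$. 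Condition (\ref{eq:assumption}) is exactly (\ref{eq:upper2}) for $g$. Hence Theorem \ref{thm:DRO} (ii) gives
\begin{equation*}
\max_{\realProbHead\in\probability{\setRand}}\alpha\,\expectation{\realProb}{\valueMVHead^\prime(\drift)}-\dParameter\distance
=\alpha\,\expectation{\refProb}{\valueMVHead^\prime(\drift)}+\frac{1}{4\dParameter}\variance{\refProb}{\alpha\,\valueMVHead^\prime(\drift)}.
\end{equation*}

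For (i), I will use this identity for each choice of $\valueMVHead^\prime$. Adding $\cost{\statVar}{\desVar}$ and minimizing over $\desVar$ shows that, for every candidate function $V^\prime$ satisfying (\ref{eq:assumption}), the right-hand side operator of (\ref{eq:DRCBellman}) applied to $V^\prime$ coincides with that of (\ref{eq:MVBellman}) applied to $V^\prime$. Consequently, $\valueDRCHead$, which solves (\ref{eq:DRCBellman}), also solves (\ref{eq:MVBellman}), and $\valueMVHead$, which solves (\ref{eq:MVBellman}), also solves (\ref{eq:DRCBellman}). Moreover, the minimizers $\desVar^*(\statVar)$ agree, since the inner objectives are equal as functions of $\desVar$. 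This is why (\ref{eq:assumption}) is imposed for both functions: it makes the two Bellman operators agree on both fixed points. In this sense the two Bellman equations are identical.

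For (ii), I will combine this with the uniqueness statements. By Proposition \ref{prop:DRC} (i), $\valueDRCHead(\statVar_0)$ is the unique optimal worst-case value of (\ref{eq:DRC}). By Proposition \ref{prop:MVC} (i), $\valueMVHead(\statVar_0)$ is the unique optimal value of (\ref{eq:MVC}). Since $\valueDRCHead$ solves (\ref{eq:MVBellman}) and $\valueMVHead$ solves (\ref{eq:DRCBellman}), the uniqueness claims in the two propositions force $\valueDRCHead=\valueMVHead$. The policies $(\desVar^*(\statVar_0),\,\desVar^*(\statVar_1),\,\cdots)$ then coincide by Propositions \ref{prop:DRC} (ii) and \ref{prop:MVC} (ii), so the two problems have the same optimal value and the same minimizer.

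The main obstacle is the step that turns "$\valueDRCHead$ solves (\ref{eq:MVBellman})" into "$\valueDRCHead=\valueMVHead$". Proposition \ref{prop:MVC} asserts uniqueness only under its transversality-type limit assumption along the optimal trajectory. So I must check that the hypotheses assumed for the pair $(\valueMVHead,\,\desVar^*)$ carry over when $\valueDRCHead$ is used in its place, and symmetrically for Proposition \ref{prop:DRC}. I would first argue that this transfer holds because the minimizers coincide, so the closed-loop trajectories are the same. If that fails, I would interpret ``unique'' in both propositions as uniqueness of the optimal value of the respective control problem and conclude directly that both optimal values equal the common fixed point.
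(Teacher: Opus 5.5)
Your argument follows the paper's proof step for step. Theorem \ref{thm:DRO}(ii), applied for each fixed $(\statVar,\desVar)$, gives (i). The paper substitutes $\cost{\statVar}{\desVar}+\alpha\,\valueMVHead^\prime(\drift)$ for the cost; dropping the constant $\cost{\statVar}{\desVar}$, as you do, changes neither (\ref{eq:upper2}) nor the variance. Statement (ii) then comes from $\valueDRCHead=\valueMVHead$, obtained from the uniqueness clauses of Propositions \ref{prop:DRC}(i) and \ref{prop:MVC}(i), followed by equality of the minimizers and Propositions \ref{prop:DRC}(ii) and \ref{prop:MVC}(ii). The paper asserts $\valueDRCHead=\valueMVHead$ in one line, exactly as your main text does, so your write-up is no less complete than the paper's. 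The weakness you flag is genuine, however, and neither of your patches removes it.

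\textbf{First patch.} This one is circular. What (i) gives you is that, for one \emph{fixed} function $\valueMVHead^\prime$, the inner objectives of (\ref{eq:DRCBellman}) and (\ref{eq:MVBellman}) agree as functions of $\desVar$. The policy of Proposition \ref{prop:DRC} is the minimizer with $\valueDRCHead$ on the right-hand side, and that of Proposition \ref{prop:MVC} is the minimizer with $\valueMVHead$. These coincide only once $\valueDRCHead=\valueMVHead$ is known. Even with common trajectories nothing would transfer. The two limit conditions are different conditions: one is on $\alpha^T\mathbb{E}[\valueMVHead^\prime(\statVar_T)]$ under arbitrary $\realProbHead_t$, the other on nested mean--variance functionals. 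Each is assumed only for its own value function.

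\textbf{Fallback.} This one is empty. Uniqueness of optimal values only says that $\valueDRCHead(\statVar_0)$ is the DRC optimum and $\valueMVHead(\statVar_0)$ is the mean--variance optimum. Equating these two optima is statement (ii) itself. After (i) you have two fixed points of one operator, and speaking of ``the common fixed point'' presupposes what is to be shown.

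\textbf{How to close the step.} You must do one of the following:
\begin{itemize}
\item read Propositions \ref{prop:DRC}(i) and \ref{prop:MVC}(i) as uniqueness of the Bellman solutions, which is what the paper implicitly does; or
\item add a hypothesis, e.g.\ the limit condition of Proposition \ref{prop:DRC} for $\valueMVHead$. Then Proposition \ref{prop:DRC}(i) applied to $\valueMVHead$ gives $\valueMVHead(\statVar_0)=\valueDRCHead(\statVar_0)$.
\end{itemize}
One inequality is actually free. By (i), $\valueMVHead\ge0$ solves (\ref{eq:DRCBellman}). Iterate the Bellman inequality along its minimizer for any fixed $(\realProbHead_1,\realProbHead_2,\dots)$ and discard $\alpha^T\mathbb{E}[\valueMVHead(\statVar_T)]\ge0$. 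This yields that $\valueMVHead(\statVar_0)$ is at least the worst-case cost of that policy, hence $\valueMVHead(\statVar_0)\ge\valueDRCHead(\statVar_0)$, with no limit condition needed. Only the reverse inequality requires the extra assumption.
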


\begin{proof}[Proof of Corollary \ref{cor:equivalence}]
By considering two cases in which $\cost{\randVar}{\desVar}$ in (\ref{eq:DRO}) and (\ref{eq:MV}) is replaced with $\cost{\statVar}{\desVar}+\alpha\,\valueDRCHead$ $(\drift)$ and $\cost{\statVar}{\desVar}+\alpha\,\valueMVHead(\drift)$, respectively, the statement (i) immediately follows from Theorem \ref{thm:DRO} (ii). 

Furthermore, from Propositions \ref{prop:DRC} (i) and \ref{prop:MVC} (i), 
$\valueDRC$ and $\valueMV$ are not only the unique optimal values of their respective problems but also identical; namely, $\valueDRC=\valueMV$. Therefore, according to the statement (i), the minimizers of the two Bellman equations are identical. From Propositions \ref{prop:DRC} (ii) and \ref{prop:MVC} (ii), these minimizers are identical to the minimizers to the DRC and mean--variance-type control problems, respectively. Hence, the two problems become identical, and the statement (ii) follows. 
\end{proof}

\begin{rem}[Equivalence Between DRC and Mean--Variance-Type Control Problems]\label{rem:equivalence}
Corollary \ref{cor:equivalence} reformulates the DRC problems as mean--variance-type control problems. The statement (i) rewrites the Bellman equation of the DRC problem as the Bellman equation of the mean--variance-type control problem. The rewritten Bellman equation is reduced to a single-stage minimization. Furthermore, the statement (ii) expresses the worst-case discounted cumulative cost in terms of the mean and variance of the discounted cumulative cost. This result indicates that the mean and variance can serve as theoretical maximum values for any controller.
\end{rem}

\subsection{Computing Solutions via Riccati Equations}\label{sec:globalLQ}
Under the case introduced in this section, solving the Bellman equation (\ref{eq:MVBellman}) is reduced to solving the corresponding Riccati equation. 
This approach enables solutions to be obtained more efficiently. We consider the quadratic cost function and linear system dynamics as follows: 
\begin{equation}\label{eq:costLQ}
\cost{\statVar}{\desVar}=\statVar^\top Q \statVar+\desVar^\top R \desVar,
\end{equation}
\begin{equation}\label{eq:dynamicsLQ}
\drift=A\statVar + B \desVar + \randVar. 
\end{equation}
Here, $Q\in\real^{\dimStat\times\dimStat}$ and $R\in\real^{\dimDes\times\dimDes}$ are the positive definite matrices defining the cost function.  $A\in\real^{\dimStat\times\dimStat}$ and $B\in\real^{\dimStat\times\dimDes}$ are the matrices that define the system dynamics. In addition, we denote the first and second moments of the reference probability distribution $\refProb$: the mean of $\randVar$ under $\refProb$ is zero, and its covariance is $\Sigma$. The information about the probability distribution $\realProb$ remains unknown.

Furthermore, we consider the Riccati equation corresponding to the Bellman equation (\ref{eq:MVBellman}). Under the case in (\ref{eq:costLQ}) and (\ref{eq:dynamicsLQ}), (\ref{eq:MVBellman}) is reduced to the following Riccati equation.
\begin{multline}\label{eq:riccati}
P^* = \alpha A^\top \tilde{P^*} A + Q \\ - \alpha^2 A^\top \tilde{P^*} B (R + \alpha B^\top \tilde{P^*} B)^{-1} B^\top \tilde{P^*} A. 
\end{multline}
Here, $P^*\in\real^{\dimStat\times\dimStat}$ is a symmetric positive semidefinite solution to (\ref{eq:riccati}). We define $\tilde{P^*}$ as $\tilde{P^*}\coloneq P^* + \frac{\alpha}{\dParameter} P^* \Sigma P^*.$

\begin{thm}[Solving Mean--Variance-Type Bellman Equation]\label{thm:MV}
Suppose that there exists a positive semidefinite symmetric matrix $P^*\in\real^{\dimStat\times\dimStat}$ that satisfies the Riccati equation (\ref{eq:riccati}). Then, the mean--variance-type Bellman equation (\ref{eq:MVBellman}) satisfies the following properties:
\begin{enumerate}
\renewcommand{\labelenumi}{(\roman{enumi})}
\item The following value function: 
\begin{equation}\label{eq:valueRiccati}
\valueMV=\statVar^\top P^* \statVar + r,
\end{equation}
satisfies (\ref{eq:MVBellman}), where $r=\frac{\alpha}{1-\alpha} \mathrm{Tr}[P^*\Sigma+\frac{\alpha}{2\dParameter} P^* \Sigma P^* \Sigma]$. 
\item The minimizer $\desVar^*(\statVar)$ to the right-hand side of (\ref{eq:MVBellman}) is given by:
\begin{equation}\label{eq:proposedController}
\desVar^*(\statVar)=-(R+\alpha B^\top \tilde{P^*} B)^{-1}\alpha B^{\top} \tilde{P^*} A \statVar.
\end{equation}
\end{enumerate}
\end{thm}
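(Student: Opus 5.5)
The plan is to verify the quadratic ansatz (\ref{eq:valueRiccati}) directly in the right-hand side of (\ref{eq:MVBellman}): compute the mean and variance terms in closed form, minimize the resulting quadratic in $\desVar$, and match coefficients against $\valueMV=\statVar^\top P^*\statVar+r$.

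\textbf{Step 1 (computing the moments).} Fix $(\statVar,\desVar)$ and set $y\coloneq A\statVar+B\desVar$. Under (\ref{eq:dynamicsLQ}) and the ansatz,
\begin{equation*}
\valueMVHead(\drift)=y^\top P^* y+2y^\top P^*\randVar+\randVar^\top P^*\randVar+r .
\end{equation*}
Since $\randVar$ has mean zero and covariance $\Sigma$ under $\refProb$, the mean is
\begin{equation*}
\expectation{\refProb}{\valueMVHead(\drift)}=y^\top P^* y+\mathrm{Tr}[P^*\Sigma]+r .
\end{equation*}
The variance is that of $2y^\top P^*\randVar+\randVar^\top P^*\randVar$. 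It splits into three pieces:
\begin{itemize}
\item $4y^\top P^*\Sigma P^* y$;
\item a cross term $4\,\mathrm{Cov}(y^\top P^*\randVar,\randVar^\top P^*\randVar)$, which involves third moments;
\item $\variance{\refProb}{\randVar^\top P^*\randVar}$, which involves fourth moments.
\end{itemize}
To obtain the stated constant $r$, I need the cross term to vanish and $\variance{\refProb}{\randVar^\top P^*\randVar}=2\,\mathrm{Tr}[P^*\Sigma P^*\Sigma]$. This holds when $\refProbHead$ has Gaussian-like third and fourth moments, for instance symmetric noise with Gaussian fourth moments. I expect this to be the main obstacle: the statement specifies only the first two moments of $\refProbHead$. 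I would therefore make this moment condition explicit, or argue that it is implicitly intended, since the formula for $r$ is exactly what the Gaussian case produces.

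\textbf{Step 2 (assembling the right-hand side).} With the factor $\alpha^2/(4\dParameter)$ on the variance, the right-hand side of (\ref{eq:MVBellman}) becomes
\begin{equation*}
\statVar^\top Q\statVar+\desVar^\top R\desVar+\alpha\,y^\top\Bigl(P^*+\tfrac{\alpha}{\dParameter}P^*\Sigma P^*\Bigr)y+\alpha\,\mathrm{Tr}[P^*\Sigma]+\alpha r+\tfrac{\alpha^2}{2\dParameter}\mathrm{Tr}[P^*\Sigma P^*\Sigma].
\end{equation*}
The matrix in the middle term is exactly $\tilde{P^*}$.

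\textbf{Step 3 (minimizing in $\desVar$).} Because $R\succ0$ and $\tilde{P^*}\succeq0$ (as $P^*\succeq0$ and $\Sigma\succeq0$), the expression is strictly convex in $\desVar$. Setting the gradient $2R\desVar+2\alpha B^\top\tilde{P^*}(A\statVar+B\desVar)$ to zero gives (\ref{eq:proposedController}), which proves (ii). Substituting back and completing the square, the $\statVar$-quadratic part is
\begin{equation*}
\statVar^\top\Bigl(Q+\alpha A^\top\tilde{P^*}A-\alpha^2A^\top\tilde{P^*}B(R+\alpha B^\top\tilde{P^*}B)^{-1}B^\top\tilde{P^*}A\Bigr)\statVar ,
\end{equation*}
which equals $\statVar^\top P^*\statVar$ by the Riccati equation (\ref{eq:riccati}).

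\textbf{Step 4 (matching the constant).} The constant part must satisfy
\begin{equation*}
r=\alpha\,\mathrm{Tr}[P^*\Sigma]+\alpha r+\tfrac{\alpha^2}{2\dParameter}\mathrm{Tr}[P^*\Sigma P^*\Sigma].
\end{equation*}
Solving this gives the stated value of $r$, so the ansatz satisfies (\ref{eq:MVBellman}). This proves (i).
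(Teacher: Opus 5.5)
Your argument is the same as the paper's: substitute the quadratic ansatz, compute the mean and variance under $\refProbHead$ to obtain the $\tilde{P^*}$-weighted quadratic form in $(\statVar,\desVar)$, minimize over $\desVar$, and close with the Riccati equation (\ref{eq:riccati}) and the fixed-point relation for $r$. The extra moment condition you flag is genuinely needed, and the paper's step ``by computing the expectation and variance'' uses it silently: a nonzero $P^*\expectation{\refProb}{\randVar\,\randVar^\top P^*\randVar}$ would add a term linear in $A\statVar+B\desVar$, and $\variance{\refProb}{\randVar^\top P^*\randVar}\neq 2\,\mathrm{Tr}[P^*\Sigma P^*\Sigma]$ would change $r$, so stating the condition explicitly is the right fix.
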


\begin{rem}[Proof of Theorem \ref{thm:MV}]
See Section \ref{sec:thm}.
\end{rem}

\begin{rem}[Solving Bellman Equations]
Theorem \ref{thm:MV} shows that the Bellman equation (\ref{eq:MVBellman}) is solvable. From Corollary \ref{cor:equivalence} (i), solving (\ref{eq:MVBellman}) is identical to solving the original Bellman equation (\ref{eq:DRCBellman}). 
\end{rem}

\begin{rem}[Solvability of Riccati Equations]\label{rem:solvability}
As the parameter $\dParameter$ approaches infinity, the Riccati equation (\ref{eq:riccati}) asymptotically matches that of the discounted linear-quadratic regulator \cite[Section 4.3]{BertsekasDPVol1_4th}. The solutions to the Riccati equations can be computed by finding the fixed point $P_\infty$, which satisfies the following difference equation.
\begin{multline}
P_{k+1} = \alpha A^\top \tilde{P_k} A + Q \\ - \alpha^2 A^\top \tilde{P_k} B (R + \alpha B^\top \tilde{P_k} B)^{-1} B^\top \tilde{P_k} A. 
\end{multline}
Here, $P_k\in\real^{\dimStat\times\dimStat}$ is a symmetric positive semidefinite matrix for each $k\in\{0,\,1,\,\cdots\}$, and $\tilde{P}_k$ is defined as $
\tilde{P}_k\coloneq P_k + \frac{\alpha}{\dParameter} P_k \Sigma P_k$. The matrix $P_k$ is obtained by applying the difference equation to the initial matrix $P_0$ $k$ times. 
\end{rem}

\begin{rem}[Statements Corresponding to Any Controllers]\label{rem:anyController}
If we replace $\desVar\in\setDes$ in Theorem \ref{thm:DRO}, Propositions \ref{prop:DRC} and \ref{prop:MVC}, and Corollary \ref{cor:equivalence} with $\desVar=\desVar^\prime(\statVar)$, the statements corresponding to any control law $\desVar^\prime(\statVar)$ can be obtained. In particular, the value function in (\ref{eq:valueRiccati}) is replaced with the following value function:
\begin{equation}\label{eq:valueLyapunov}
X(\statVar)=\statVar^\top Y\statVar+\frac{\alpha}{1-\alpha} \mathrm{Tr}[Y\Sigma+\frac{\alpha}{2\dParameter} Y \Sigma Y \Sigma].
\end{equation}
Here, $Y\in\real^{\dimStat\times\dimStat}$ is a symmetric positive semidifinite solution to the following Lyapunov equation with $\tilde{Y}\coloneq\frac{\alpha}{\dParameter}Y\Sigma Y$:
\begin{equation}\label{eq:lyapnov}
Y = \alpha (A-BK)^\top \tilde{Y} (A-BK) + Q + K^\top RK.
\end{equation}
The function $X(\statVar_0)$ serves as the theoretical maximum value of the discounted cumulative cost (\ref{eq:MVC}) for the controller $\desVar^\prime(\statVar)\coloneq-K\statVar$ with $K\in\real^{\dimDes\times\dimStat}$, since Corollary \ref{cor:equivalence} (ii) holds regardless of whether $\desVar$ is the minimizer or given by $\desVar=\desVar^\prime(\statVar)$.
\end{rem}

\subsection{Proofs of Theorems}\label{sec:thm}
\begin{proof}[Proof of Theorem \ref{thm:DRO}]
First, we prove the statement (i). From the assumptions introduced in Theorem \ref{thm:DRO}, the probability distribution $\refProb$ and $\realProb$ can be expressed as follows:
\begin{equation*}
\refProbHead(\randVar_i)=p_0^i,
\end{equation*}
\begin{equation*}
\realProbHead(\randVar_i)=p^i, 
\end{equation*}
where $p_0^i,\,p^i\in\realPlus$ are scalars satisfy $(p_0^1,\,\cdots,\,p_0^N),$ $\,(p^1,\,\cdots,\,p^N)\in\probabilityHead_N(\setRand_N)$. 
The set $\probabilityHead_N(\setRand_N)$ is defined as 
\begin{equation*}
\probabilityHead_N(\setRand_N) \coloneq \left\{(p^1,\,\cdots,\,p^N)\in[0,\infty)^N\mid\sum_{i=1}^N p^i = 1\right\}.
\end{equation*}
Then inner maximization problem of the  original DRO problem (\ref{eq:DRO}) can be rewritten as follows:
\begin{equation}\label{eq:rewrittenDRO}
\max_{(p^1,\,\cdots,\,p^N)\in\probabilityHead_N(\setRand_N)}  \sum_{i=1}^N \left( p^i\,\cost{\randVar^i}{\desVar}-\dParameter\,p_0^i\left(1-\frac{p^i}{p_0^i}\right)^2\right).
\end{equation}

This rewritten problem (\ref{eq:rewrittenDRO}) becomes a convex optimization problem that satisfies the Slater condition\cite[Section 4.2]{boyd2004convex}. Hence, by introducing the Lagrange dual problem\cite[5.2]{boyd2004convex} corresponding to (\ref{eq:rewrittenDRO}), we obtain the following problem:
\begin{equation}\label{eq:lagrangeDual}
\begin{split}
&\max_{(p^1,\,\cdots,\,p^N)\in\probabilityHead_N(\setRand_N)} 
 \sum_{i=1}^N \left( p^i\,\cost{\randVar^i}{\desVar}-\dParameter\,p_0^i\left(1-\frac{p^i}{p_0^i}\right)^2\right) \\
&= \inf_{s\in\real} \quad \max_{(p^1,\,\cdots,\,p^N)\in[0,\infty)^N} \quad \\
& \sum_{i=1}^N \left( p^i\,\cost{\randVar^i}{\desVar}-\dParameter\,p_0^i\left(1-\frac{p^i}{p_0^i}\right)^2\right) 
 + s\left(1-\sum_{i=1}^N p^i\right).
\end{split}
\end{equation}
Here, $s\in\real$ is a Lagrange multiplier corresponding to the constraint $1-\sum_{i=1}^N p^i$, which appears in the definition of $\probabilityHead_N(\setRand_N)$. 
The object function in the right-hand side of (\ref{eq:lagrangeDual}) is a quadratic function of $(p^1,\,\cdots,\,p^N)$. Thus, the maximizer ${p^i}^*$ for every $i\in\{1,\,\cdots,\,N\}$ is computed as ${p^i}^*={p_0^i} / {2\dParameter}\times(\cost{\randVar^i}{\desVar}+2\dParameter-s)^+$, and obtain 
\begin{equation}\label{eq:lagrangeDual2}
\begin{split}
&\max_{(p^1,\,\cdots,\,p^N)\in[0,\infty)^N} \quad \\
&\quad \sum_{i=1}^N \left( p^i\,\cost{\randVar^i}{\desVar}-\dParameter\,p_0^i\left(1-\frac{p^i}{p_0^i}\right)^2\right) 
 + s\left(1-\sum_{i=1}^N p^i\right) \\
&\quad=\dParameter\,\expectation{\refProb}{\left\{\left(\frac{\cost{\randVar^i}{\desVar}+2\dParameter-s}{2\dParameter} \right)^+\right\}^2 }-\gamma+s, \\
&\quad \leq \dParameter\,\expectation{\refProb}{\left(\frac{\cost{\randVar^i}{\desVar}+2\dParameter-s}{2\dParameter} \right)^2 }-\gamma+s. \raisetag{16pt}
\end{split}
\end{equation}

The final expression in (\ref{eq:lagrangeDual2}) is a  convex quadratic function of $s$. Thus, the minimizer $s^*$ can be computed as follows: 
\begin{equation}\label{eq:optimalMultiplier} s^*=\expectation{\refProb}{\cost{\randVar^i}{\desVar}}. 
\end{equation}
Hence, by substituting $s^*$ in place of $s$ in (\ref{eq:lagrangeDual}) and (\ref{eq:lagrangeDual2}), the following inequality holds:
\begin{equation*}
\begin{split}
&\inf_{s\in\real} \quad \max_{(p^1,\,\cdots,\,p^N)\in[0,\infty)^N} \quad \\
&\quad \sum_{i=1}^N \left( p^i\,\cost{\randVar^i}{\desVar}-\dParameter\,p_0^i\left(1-\frac{p^i}{p_0^i}\right)^2\right) 
 + s\left(1-\sum_{i=1}^N p^i\right) \\
&\quad \leq \expectation{\refProb}{\cost{\randVar^i}{\desVar}}+\frac{1}{4\dParameter}\variance{\refProb}{\cost{\randVar^i}{\desVar}}.
\end{split}
\end{equation*}
Thus, based on the (\ref{eq:rewrittenDRO}), 
the final expression matches the original problem (\ref{eq:rewrittenDRO}), thus, the statement (i) is proven.

Furthermore, we prove the statement (ii). By substituting $s^*$ in place of $s$ in (\ref{eq:lagrangeDual2}), we obtain the following equation. 
\begin{equation*}\label{eq:lagrangeDual4}
\begin{split}
&\max_{(p^1,\,\cdots,\,p^N)\in[0,\infty)^N} \quad \\
&\quad \sum_{i=1}^N \left( p^i\,\cost{\randVar^i}{\desVar}-\dParameter\,p_0^i\left(1-\frac{p^i}{p_0^i}\right)^2\right) 
 + s^*\left(1-\sum_{i=1}^N p^i\right) \\
& = \dParameter\,\expectation{\refProb}{\left\{ \left(\frac{\cost{\randVar^i}{\desVar}+2\dParameter-s^*}{2\dParameter}\right)^+ \right\}^2 }-\gamma+s^*.
\end{split}
\end{equation*}
From the assumption introduced in Theorem \ref{thm:DRO} (ii) and the definition of $s^*$ in (\ref{eq:optimalMultiplier}), the last equation can be reformulated as follows:
\begin{equation*}\label{eq:lagrangeDual5}
\begin{split}
&\dParameter\,\expectation{\refProb}{\left\{ \left(\frac{\cost{\randVar^i}{\desVar}+2\dParameter-s^*}{2\dParameter}\right)^+ \right\}^2 }-\gamma+s^* \\
&\quad= \dParameter\,\expectation{\refProb}{\left(\frac{\cost{\randVar^i}{\desVar}+2\dParameter-s^*}{2\dParameter} \right)^2 }-\gamma+s^* \\
&\quad=\expectation{\refProb}{\cost{\randVar^i}{\desVar}}+\frac{1}{4\dParameter}\variance{\refProb}{\cost{\randVar^i}{\desVar}}.
\end{split}
\end{equation*}
By observing that the right-hand side of (\ref{eq:lagrangeDual2}) is a convex function of $s$ and $s^*$ is the point that attains the extreme value of (\ref{eq:lagrangeDual2}), we conclude that the statement (ii) holds. 
\end{proof}

\begin{proof}[Proof of Theorem \ref{thm:MV}]
The theorem can be proved by substituting $\valueMV=\statVar^\top P^* \statVar + r$ into the Bellman equation (\ref{eq:MVBellman}).
First, based on (\ref{eq:dynamicsLQ}), we can write the following equation:
\begin{equation*}
\begin{split}
&\cost{\statVar}{\desVar}+\alpha\,\valueMVHead(\drift) = \alpha\,r + \\
&
\begin{bmatrix}
\statVar \\
\desVar \\
\randVar
\end{bmatrix}^\top
\begin{bmatrix}
\alpha\,A^\top P^* A + Q & \alpha\,A^\top P^* B & \alpha\,A^\top P^*  \\
\alpha\,B^\top P^* A & \alpha\,B^\top P^* B + R & \alpha\,B^\top P^* \\
\alpha\,P^* A & \alpha\,P^* B & \alpha\,P^* 
\end{bmatrix}
\begin{bmatrix}
\statVar \\
\desVar \\
\randVar
\end{bmatrix}
.
\end{split}
\end{equation*} 
Furthermore, by computing the expectation and variance of the last expression, the right-hand side of (\ref{eq:MVBellman}) is given by:
\begin{equation*}
\begin{split}
\min_{\desVar\in\setDes}\quad
&\begin{bmatrix}
\statVar \\
\desVar
\end{bmatrix}^\top
\begin{bmatrix}
\alpha\,A^\top \tilde{P^*} A + Q & \alpha\,A^\top \tilde{P^*} B \\
\alpha\,B^\top \tilde{P^*} A & \alpha\,B^\top \tilde{P^*} B + R \end{bmatrix}
\begin{bmatrix}
\statVar \\
\desVar
\end{bmatrix}
+r.
\end{split}
\end{equation*}
We can compute the minimizer $\desVar^*$ to the last equation as given in (\ref{eq:proposedController}). Finally, based on the minimizer (\ref{eq:proposedController}) and  the Riccati equation (\ref{eq:riccati}), we observe that the last expression is equal to $\valueMV$. This completes the proof.
\end{proof}

\section{Numerical Experiments}\label{sec:experiment}
This section presents the numerical experiments for an inverted pendulum on a cart. We verify that, under distributional changes, the proposed controller (\ref{eq:proposedController}) achieves a lower theoretical maximum value of the discounted cumulative cost (denoted in  (\ref{eq:MVC})), than the conventional discounted linear-quadratic regulator \cite[Section 4.3]{BertsekasDPVol1_4th}. As noted in Remark \ref{rem:solvability}, this conventional controller coincides with the proposed one as $\gamma\rightarrow\infty$.  Moreover, Theorem \ref{thm:MV} (i) and Remark \ref{rem:anyController} show that the value of  (\ref{eq:MVC}) under the proposed controller and the conventional controller, computed efficiently from the value functions in (\ref{eq:valueRiccati}) and (\ref{eq:valueLyapunov}), respectively, are  equal to the worst-case discounted cumulative cost. This equivalence follows from Corollary \ref{cor:equivalence} (ii) and Proposition \ref{prop:DRC}, provided that $\gamma$ is  sufficiently large such that the assumption (\ref{eq:assumption}) holds.

We consider an inverted pendulum on a cart ($\dimStat = 4$,$\,\dimDes=1$). The system matrices and parameters are given by: 
\begin{equation*}
\begin{split}
&A=
\begin{bmatrix}
1 && 0.1 && -0.0506 && -0.0017 \\
0 && 1 && -1.0240 && -0.0506 \\
0 && 0 && 1.0723 && 0.1024 \\
0 && 0 && 1.4628 && 1.0723
\end{bmatrix}
,\\
&B = 
\begin{bmatrix}
0.0106 && 0.202 && -0.007 && -0.146
\end{bmatrix}
^\top
, \\
&Q=10\times I
,\quad
R=1,\quad
\alpha = 0.985,\quad \\
&\dParameter \in \{10^5,3\times10^5,10^6,3\times10^6,10^7\}
,\\ 
&\Sigma=
\begin{bmatrix}
2 && 0.5 && 0 && 0 \\
0.5 && 3 && 0 && 0 \\
0 && 0 && 2 && 0.5 \\
0 && 0 && 0.5 && 3
\end{bmatrix}
.
\end{split}
\end{equation*}
We verified that (\ref{eq:assumption}) holds at each time step with probability at least $98\%$ for $\dParameter=10^5$, and probability one for larger $\dParameter$
, along trajectories generated by the system dynamics using $1000$ samples from a Gaussian distribution with covariance $\Sigma$, and the corresponding value functions in (\ref{eq:valueRiccati}) and (\ref{eq:valueLyapunov}).    

The results are presented in Figure \ref{fig:oneAndPendulum}. We observe that the proposed method achieves a lower theoretical maximum of the discounted cumulative cost than the conventional method.

\vspace{1pt}
\begin{figure}[t]
   \centering
   \includegraphics[scale=0.9,clip]{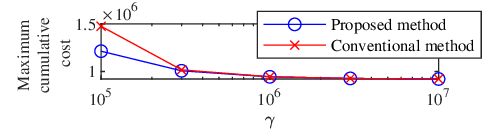}
   \caption{Numerical results for the inverted pendulum on a cart. The circles and crosses  represent the results of the proposed and conventional methods, respectively. The vertical axis represents the theoretical maximum of the discounted cumulative cost as shown in Corollary \ref{cor:equivalence} (ii), and the horizontal axis represents the coefficient for the distributional distance penalty in the distributionally robust control problem.}
   \label{fig:oneAndPendulum}
\end{figure}

\section{CONCLUSIONS}\label{sec:conc}
In this study, we propose a DRC method that avoids SIP and does not require the true system distribution, such as its moments. It extends existing theories on distributionally robust controller synthesis via the Bellman and Riccati equations, thereby simplifying both theory and computation.

\addtolength{\textheight}{-0cm}   




\bibliography{AvoidSIP}
\bibliographystyle{IEEEtran}

\end{document}